\documentclass[a4paper, 11pt]{amsart}
\usepackage{amsmath,amsthm,amssymb}
\usepackage[a4paper,margin=1.12in]{geometry}
\usepackage{mathtools}
\usepackage{enumerate}
\usepackage{ulem}
\usepackage{cite}
\usepackage[driverfallback=dvipdfm]{hyperref}
\usepackage[dvipdfmx]{graphicx}
\usepackage{bbold}
\usepackage{xcolor}

\usepackage{setspace}
\usepackage{dsfont}
\usepackage{comment}
\usepackage{tikz-cd}

\DeclareMathOperator*{\esssup}{ess\,sup}

\theoremstyle{plain}
\newtheorem{thm}{Theorem}[section]
\newtheorem{prop}[thm]{Proposition}
\newtheorem{lem}[thm]{Lemma}

\newtheorem{rmk}[thm]{Remark}

\theoremstyle{definition}
\newtheorem{dfn}{Definition}[section]
\newtheorem{prob}{Problem}

\makeatletter
	
	\@addtoreset{equation}{section}
\makeatother

\title[Isometries on Sobolev spaces]{Metric Rigidity in Anchored Sobolev Spaces on Intervals}

\author[M.--R. Lin]{Min--Ruei Lin}
\address[M.--R. Lin]
{Department of Applied Mathematics,
National Sun Yat--sen University,
Kaohsiung, 80424, Taiwan;
and the
Graduate School of Science and Technology, Niigata University, Niigata 950--2181, Japan}
\email{m082030021@student.nsysu.edu.tw}

\subjclass[2020]{Primary 46B04; Secondary 46B40, 46E35.}

\keywords{Tingley’s problem, positive unit sphere, surjective isometry, Sobolev space, ordered Banach space, phase-isometry.}

\begin{document}

\begin{abstract}
    For $1\le p\le\infty$ and $i=1,2$, let $W^{k_i,p}(\Omega_i)$ be the Sobolev space on a bounded open interval $\Omega_i$ with differentiability order $k_i$.
    We equip $W^{k_i,p}(\Omega_i)$ with an anchored Sobolev norm and the order $\ge_{k_i,p}$ defined by $f^{(j)}(x_i)\ge 0$ for each $j=0,\ldots,k_i-1$ and $f^{(k_i)}\ge 0$ a.e.
    We show that the positive unit spheres of $W^{k_1,p}(\Omega_1)$ and $W^{k_2,p}(\Omega_2)$ are surjectively isometric if and only if $k_1=k_2$.
    Every such isometry extends uniquely to a complex--linear isometric order isomorphism, for which we obtain a coordinate representation.
    The same conclusions hold for surjective phase--isometries.
    For $1<p<\infty$, they also hold for surjective norm--additive maps.
\end{abstract}
\maketitle

\section{Introduction}
Tingley~\cite{T87} asks whether, for arbitrary Banach spaces E and F, each surjective isometry $T:S(E)\to S(F)$ is the restriction of a unique surjective real--linear isometry from $E$ onto $F$.
This extension problem has been studied extensively in several classes of function spaces~\cite{cue1, cue2, hat1, hat2, hat3, wang1, wang2, wang3, tan1, tan2}.
In the particular case of $L^p(\mu)$--spaces, the answer is affirmative whenever the measure $\mu$ is $\sigma$--finite~\cite{tan1,tan2}.

A promising direction is to consider a variant of Tingley’s problem in which the entire unit sphere is replaced by its positive part.
This viewpoint is natural for ordered Banach spaces whose positive cones generate their underlying real parts; in the complex setting, the whole space is then recovered by complexification.
Peralta~\cite{PAM} introduced such a variant.
A more general form of his question is stated below:
\begin{prob}\label{Tingley}
    Let $T:S(E)^+\to S(F)^+$ be a surjective isometry between the positive unit spheres of two ordered Banach spaces $E$ and $F$ with generating cones; namely
    $$\|T(x)-T(y)\| = \|x-y\|\qquad (x,y\in S(E)^+).$$
    Is there a surjective linear isometry $\Phi: E\to F$ extending $T$?
\end{prob}
For complex $L^p$--spaces with $1\le p\le\infty$, the answer is affirmative~\cite{LNW21}.
In 2026, Enami \textit{et al.}~\cite{C1LIP} treated $C^1_{\mathbb{C}}([0,1])$ and $\operatorname{Lip}_{\mathbb{C}}([0,1])$ with anchored norms, while observing that their method does not extend to higher--order differentiable spaces.
A different approach appears in Lin's study of $AC^p(\Omega)$~\cite{L26}.
In the case of the norm $|f(x_0)|+\|f'\|_1$ with a fixed base point $x_0$, the space $AC^1(\Omega)$ is identified with an $L^1$--space by adjoining an atomic coordinate.
Since the space $AC^1(\Omega)$, equipped with this norm and order, can be canonically identified with the Sobolev space $W^{1,1}(\Omega)$, it is natural to consider the more general spaces $W^{k,p}(\Omega)$ with $k\in\mathbb{N}$ and $1\le p\le\infty$.
In the present paper, we extend this atom-adjoining construction for higher--order derivatives: $W^{k,p}(\Omega)$ is represented as an $L^p$-space with finitely many atomic coordinates.
The corresponding extension results then follow from the theorems of Leung, Ng, and Wong.

Before addressing the positive-sphere variant of Tingley’s problem for $W^{k,p}(\Omega)$, one must first determine whether a surjective isometry can exist between spaces of different differentiability orders.
Related rigidity phenomena have been studied for spaces of continuously differentiable functions.
Araujo~\cite{A04} proved that the existence of a linear biseparating map between $C^r$-- and $C^s$--spaces forces $r=s$, while Leung, Ng, and Tang~\cite{LNT22} obtained the analogous conclusion for surjective linear isometries between broad classes of vector-valued differentiable function spaces.
Motivated by these results, we show that the differentiability order of the ordered Sobolev space $W^{k,p}$ is determined by the metric structure of its positive unit sphere, without assuming linearity of the given isometry.

The purpose of this paper is to provide an affirmative answer to Problem~\ref{Tingley} for the Sobolev spaces $W^{k,p}(\Omega)$ on a bounded open interval $\Omega$, equipped with an anchored $W^{k,p}$--norm.
The paper is organized as follows.
In Section~2, we first identify $W^{k,p}(\Omega)$ with an appropriate $L^p(X,\mu)$--space via a complex--linear isometric order isomorphism and establish the corresponding extension theorem.
Finally, we establish an order--geometric rigidity result.
The existence of a surjective isometry or phase--isometry between the positive unit spheres of $W^{k_1,p}(\Omega_1)$ and $W^{k_2,p}(\Omega_2)$ forces $k_1=k_2$, and every such map extends uniquely to a complex--linear isometric order isomorphism; for $1<p<\infty$, the same conclusions hold for surjective norm--additive maps.

\section{Isometries on the positive unit sphere of Sobolev spaces}\label{sec2}
Let $\Omega$ be a bounded open interval in $\mathbb{R}$ equipped with Lebesgue measure $m$.
For each $1\le p\le \infty$ and $k\in\mathbb{N}$, we consider the complex vector space of $k$--times weakly differentiable functions 
$$W^{k,p}_\mathbb{C}(\Omega):= \left\{ f\in L^p(\Omega): \exists  g_j\in L^p(\Omega) \text{ s.t. } \int_\Omega f\varphi^{(j)} = (-1)^j\int_\Omega g_j\varphi,\ \forall \varphi\in C_c^\infty(\Omega) \right\},$$
where $g_j$ is the \textit{$j$--th weak derivative} of $f$, for each $j=1,\ldots,k$.
We write $f^{(j)} = g_j$ for convenience, and $f^{(0)} = f$.
In this paper, for each $j=0,\ldots,k-1$, we identify $f^{(j)}\in L^p(\Omega)$ with its absolutely continuous representative, and hence the values $f^{(j)}(x_0)$ are well defined.
Let $W^{k,p}_\mathbb{R}(\Omega)$ be the real subspace of $W^{k,p}_\mathbb{C}(\Omega)$.
Fix a base point $x_0\in\Omega$.
We define the positive cone of vector space $W^{k,p}_\mathbb{C}(\Omega)$ as 
$$W^{k,p}_\mathbb{C}(\Omega)^+:=\{f\in W^{k,p}_\mathbb{R}(\Omega): f^{(k)}\ge 0\ \mathrm{a.e.},\ f^{(j)}(x_0)\ge 0,\ \forall j=0,\cdots, k-1\} = W^{k,p}_\mathbb{R}(\Omega)^+,$$
and its induced order $\ge_{k,p}$.
Let the anchored $W^{k,p}$--norm be defined as 
\begin{equation}\label{W_norm}
\|f\|_{k,p} :=
\begin{cases}
    \Big(\displaystyle\sum_{j=0}^{k-1}|f^{(j)}(x_0)|^p + \|f^{(k)}\|_p^p\Big)^{1/p}\qquad &(1\le p<\infty)\\
    \max\big\{\displaystyle\max_{0\le j<k}\{|f^{(j)}(x_0)|\},\ \|f^{(k)}\|_\infty\big\}\qquad &(p=\infty)
\end{cases},
\end{equation}
where $x_0\in\Omega$ is the base point.
Then we denote the order Banach space $(W^{k,p}_\mathbb{C}(\Omega),\ \|\cdot\|_{k,p},\ \ge_{k,p})$ by $W^{k,p}(\Omega)$.
Let $S(W^{k,p}(\Omega))$ denote the unit sphere of $W^{k,p}(\Omega)$, and $W^{k,p}(\Omega)^+$ denote the positive cone of $W^{k,p}(\Omega)$.
Define
$$S(W^{k,p}(\Omega))^+:=S(W^{k,p}(\Omega))\cap W^{k,p}(\Omega)^+=\{f\in W^{k,p}(\Omega)^+: \|f\|_{k,p}=1\},$$
the positive part of the unit sphere of $W^{k,p}(\Omega)$.
The positive unit sphere of $L^p(\Omega)$ is written as 
$$S(L^p(\Omega))^+=\{u\in L^p(\Omega): \|u\|_p =1, u\ge 0\ \mathrm{a.e.}\}$$
with the usual order and the $L^p$--norm.

We now define several types of maps.
\begin{dfn}\label{dfn:2.1}
    Let $A, B$ be non-empty subsets of normed spaces $E,F$, respectively.
    A map $T:A\to B$ is called 
    \begin{enumerate}
    \item a \textit{phase--isometry} if 
    $$\big\{\|x-\alpha y\|_E:\alpha\in\mathbb{T}\big\} = \{\|T(x)-\alpha T(y)\|_F:\alpha\in\mathbb{T}\}\qquad (x,y\in A),$$
    where $\mathbb{T}:=\{z\in\mathbb{C}: |z|=1\}$.
    \item a \textit{norm--additive map} if 
    $$\|x+y\|_E = \|T(x)+T(y)\|_F\qquad (x,y \in A).$$
    \end{enumerate}
    If, moreover, $E$ and $F$ are ordered Banach spaces, then
    \begin{enumerate}
        \item[(3)] a \textit{complex--linear isometric order isomorphism} $\Phi:E\to F$ is a complex--linear surjective isometry such that $\Phi(E^+)=F^+$ and $\Phi^{-1}(F^+) = E^+$, where $E^+$ and $F^+$ are the positive cones of $E$ and $F$, respectively.
    \end{enumerate}
\end{dfn}
Phase--isometries have been investigated in~\cite{HLW24,LHW24,THH26}, whereas norm-additive maps have been studied in~\cite{HDL22, SS24, ZTW19, ZTW21}.

For each $f \in W^{k,p}(\Omega)$, the highest-order weak derivative $f^{(k)}$ belongs to $L^{p}(\Omega)$. Moreover, upon identifying the lower-order derivatives with their absolutely continuous representatives, the quantities $f^{(j)}(x_{0})$, $j=0,\ldots,k-1$, are well-defined complex numbers.
This motivates adjoining $k$ isolated atoms to the measurable space $(\Omega, \mathcal{L}(\Omega),m)$ in order to encode these point-evaluation coordinates.
Let $\{\ast^{(j)}:j=0,\ldots,k-1\}$ be a set disjoint from $\Omega$.
We denote the Lebesgue $\sigma$--algebra of $\Omega$ by $\mathcal{L}(\Omega)$, and denote Dirac measure at $\ast^{(j)}$ by $\delta_{\ast^{(j)}}$.
In particular, the resulting representation is stated in the following proposition.

\begin{prop}\label{prop:2.1}
    Suppose $1\le p\le \infty$.
    Fix $k\in\mathbb{N}$.
    Let $X=\{\ast^{(j)}:j=0,\ldots,k-1\}\sqcup\Omega$.
    Define a $\sigma$-algebra $\mathfrak{A}:=\{A\subseteq X: A\cap\Omega\in\mathcal{L}(\Omega)\}$, and a measure $\mu(E):=m(E\cap\Omega) + \sum_{j=0}^{k-1}\delta_{\ast^{(j)}}(E)$ for each $E\in\mathfrak{A}$.
    Then $W^{k,p}(\Omega)$ is complex--linearly isometric and order isomorphic to $L^p(X,\mathfrak{A},\mu)$.
\end{prop}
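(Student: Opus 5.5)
The plan is to write down the obvious coordinate map and check that it is a bijective isometry preserving the cones in both directions. Define
$$\Phi: W^{k,p}(\Omega)\to L^p(X,\mathfrak{A},\mu),\qquad \Phi(f)(\ast^{(j)}) := f^{(j)}(x_0)\ (0\le j\le k-1),\quad \Phi(f)|_{\Omega} := f^{(k)}.$$
This is well defined: $f^{(j)}$ for $j<k$ is identified with its absolutely continuous representative, so $f^{(j)}(x_0)$ makes sense, and $f^{(k)}$ is determined a.e. Each coordinate depends complex-linearly on $f$, so $\Phi$ is complex-linear. Since $\mu$ gives mass one to each atom and restricts to $m$ on $\Omega$,
$$\|\Phi f\|_{L^p(\mu)}^p=\sum_{j<k}|f^{(j)}(x_0)|^p+\|f^{(k)}\|_p^p .$$
For $p=\infty$, the essential supremum with respect to $\mu$ is the maximum of the atomic values and $\|f^{(k)}\|_\infty$, because atoms have positive measure. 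Either way this is exactly $\|f\|_{k,p}$ from \eqref{W_norm}, so $\Phi$ is an isometry and in particular injective.

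For surjectivity, take $u\in L^p(X,\mu)$, set $c_j:=u(\ast^{(j)})$ and $g:=u|_\Omega\in L^p(\Omega)$. Define $f$ by the Taylor formula with integral remainder:
$$f(x):=\sum_{j=0}^{k-1}\frac{c_j}{j!}(x-x_0)^j+\int_{x_0}^{x}\frac{(x-t)^{k-1}}{(k-1)!}\,g(t)\,dt .$$
Since $\Omega$ is bounded, $L^p(\Omega)\subset L^1(\Omega)$. Hence $G_1(x):=\int_{x_0}^x g$ is absolutely continuous, and iterating, $f^{(j)}$ for $j<k$ is absolutely continuous with
$$f^{(j)}(x)=\sum_{i\ge j}\frac{c_i}{(i-j)!}(x-x_0)^{i-j}+\int_{x_0}^x\frac{(x-t)^{k-1-j}}{(k-1-j)!}\,g(t)\,dt .$$
Each $f^{(j)}$ is bounded, hence lies in $L^p(\Omega)$, and $f^{(k)}=g$ weakly. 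This uses the standard fact that an absolutely continuous function's classical a.e. derivative is its weak derivative; one integrates by parts against $\varphi\in C_c^\infty(\Omega)$. So $f\in W^{k,p}(\Omega)$, $f^{(j)}(x_0)=c_j$, and $\Phi f=u$.

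The main technical point is the converse: every $f\in W^{k,p}(\Omega)$ is recovered from its data this way, i.e.\ the weak derivatives coincide with derivatives of the absolutely continuous representatives. I would use the one-dimensional fact that a distribution with zero derivative on an interval is constant. From this, $f^{(k-1)}-\int_{x_0}^x f^{(k)}$ is constant, and inducting downward gives the Taylor representation. This also supplies injectivity independently of the norm computation.

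Order: $\Phi$ maps real functions to real functions, since $f$ is real iff all $f^{(j)}(x_0)$ and $f^{(k)}$ are real, by the Taylor formula. Moreover $f\ge_{k,p}0$ iff $f^{(j)}(x_0)\ge0$ for all $j<k$ and $f^{(k)}\ge0$ a.e., which is precisely $\Phi f\ge0$ $\mu$-a.e. Hence $\Phi(W^{k,p}(\Omega)^+)=L^p(\mu)^+$ and $\Phi^{-1}(L^p(\mu)^+)=W^{k,p}(\Omega)^+$, so $\Phi$ is a complex-linear isometric order isomorphism in the sense of Definition~\ref{dfn:2.1}(3).
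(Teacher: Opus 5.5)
Your proposal is correct and follows the paper's proof essentially step for step: the same coordinate map $f\mapsto\sum_{j<k}f^{(j)}(x_0)\chi_{\{\ast^{(j)}\}}+f^{(k)}\chi_\Omega$, the same norm computation on the atomic and atomless parts, surjectivity via the Taylor formula with integral remainder, and the same comparison of cones. Your extra paragraph on recovering every $f$ from its Taylor data makes explicit a point the paper takes for granted, but it does not change the route; that point gives definiteness of $\|\cdot\|_{k,p}$, hence injectivity, and the real-valuedness criterion.
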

\begin{proof}
We write $L^p(X,\mathfrak{A}, \mu)$ as $L^p(\mu)$ for simplicity.
Define a map
\begin{equation*}
J:W^{k,p}(\Omega)\to L^p(\mu)\qquad\mathrm{by}\qquad
J(f)=\sum_{j=0}^{k-1}f^{(j)}(x_0)\chi_{\{\ast^{(j)}\}}+f^{(k)}\chi_\Omega.
\end{equation*}
Linearity is immediate.
For $1\le p<\infty$, the definition of $\mu$ reveals that
$$
\|Jf\|_{L^p(\mu)}^p=\sum_{j=0}^{k-1}|(Jf)(*^{(j)})|^p+\int_{\Omega}|(Jf)(t)|^p\,dm=\sum_{j=0}^{k-1}|f^{(j)}(x_0)|^p+\|f^{(k)}\|_p^p=\|f\|_{k,p}^p,
$$
and hence, $\|Jf\|_{L^p(\mu)}=\|f\|_{k,p}$.
If $p=\infty$, then 
\begin{align*}
\|Jf\|_{L^\infty(\mu)}&:=\esssup_{t\in X}\left\{\big|\sum_{j=0}^{k-1}f^{(j)}(x_0)\chi_{\{\ast^{(j)}\}}(t) + f^{(k)}(t)\chi_{\Omega}(t)\big|\right\}\\
&=\max\left\{|f(x_0)|,\ldots,\ |f^{(k-1)}(x_0)|,\ \|f^{(k)}\|_{L^\infty(\Omega)}
\right\}=\|f\|_{k,\infty}.
\end{align*}
Thus, for each $1\le p\le\infty$, $J$ is an isometry.

We next prove that $J$ is surjective.
Let $h\in L^{p}(\mu)$, and set $a_j:=h\bigl(\ast^{(j)}\bigr)$ for each
$j=0,\ldots,k-1$, and $g:=h|_{\Omega}\in L^{p}(\Omega)$.
Since $\Omega$ is bounded, $L^{p}(\Omega)\subseteq L^{1}(\Omega)$ for every $1\leq p\leq\infty$.
Hence the function
$$
R_k g(x):= \frac{1}{(k-1)!}\int_{x_0}^{x}(x-t)^{k-1}g(t)\,dt,
\qquad x\in\Omega,
$$
is well defined.
In particular, the indefinite integral of an $L^{1}$-function is absolutely continuous and has that function as
its weak derivative.
Iterating this fact yields, for $j=0,\ldots,k-1$,
$$
D^{j}(R_k g)(x)=\frac{1}{(k-j-1)!}\int_{x_0}^{x}(x-t)^{k-j-1}g(t)\,dt,
$$
where the right-hand side denotes the absolutely continuous representative of $D^{j}(R_k g)$, and $D^{k}(R_k g)=g$ a.e. on $\Omega$.
In particular, $D^{j}(R_k g)(x_0)=0$ for each $j=0,\ldots,k-1$.
Moreover, since $\Omega$ is bounded, each of the above lower-order derivatives belongs to $L^{p}(\Omega)$.

Now define
\begin{equation}\label{eq:Sobolev}
f(x):=\sum_{\ell=0}^{k-1}a_\ell\frac{(x-x_0)^\ell}{\ell!}+R_k g(x),
\qquad x\in\Omega.
\end{equation}
The polynomial term belongs to $W^{k,p}(\Omega)$, and therefore $f\in W^{k,p}(\Omega)$. For every $j=0,\ldots,k-1$, using the absolutely continuous representative of $f^{(j)}$, we obtain
$$
f^{(j)}(x_0)=a_j,\qquad\text{and}\qquad f^{(k)}=g
\quad\text{a.e. on }\Omega.
$$
Consequently,
$$
Jf=\sum_{j=0}^{k-1}
a_j\chi_{\{\ast^{(j)}\}}+g\chi_{\Omega}=h.
$$
Thus $J$ is surjective.

Finally, we verify that $J$ is an order isomorphism.
Since $J$ is linear and bijective, it is sufficient to show $f\in W^{k,p}_\mathbb{R}(\Omega)^+$ if and only if $J(f)\in L^p(\mu)^+$.
If $f\in W^{k,p}_\mathbb{R}(\Omega)^+$, we have
\begin{align*}
f\ge_{k,p}0\qquad&\Longleftrightarrow\qquad
f^{(j)}(x_0)\ge 0,\quad j=0,\ldots,k-1,
\quad\mathrm{and}\quad
f^{(k)}\ge 0\quad \text{a.e. on } \Omega\\
&\Longleftrightarrow\qquad \sum_{j=0}^{k-1}f^{(j)}(x_0)\chi_{\{\ast^{(j)}\}}+f^{(k)}\chi_\Omega\ge 0\quad\mu\mathrm{-a.e.}
\end{align*}
That is, $J(f)\ge 0$ $\mu$--a.e.
Equivalently, $J(f)\in L^p(\mu)^+$.
Therefore $J$ is a complex--linear isometric order isomorphism.
\end{proof}

Recall that a nonzero element $u$ of the positive cone of a vector lattice is a \textit{lattice atom} if
$$
 0\le v\le u \quad\Longrightarrow\quad v=\lambda u
 \quad\text{for some }\lambda\in[0,1].
$$
An order isomorphism maps lattice atoms bijectively onto lattice atoms.
The following proposition identifies the lattice atoms of the $L^p$--space associated with the measure space introduced in Proposition~\ref{prop:2.1}.

\begin{prop}\label{prop:2.2}
Let $1\leq p\leq\infty$, and let $(X,\mathfrak{A},\mu)$ be the measure space defined in Proposition~\ref{prop:2.1}.
Then the lattice atoms of $L^p(\mu)$ are precisely $\big\{
c\chi_{\{\ast^{(j)}\}}: c>0,\ j=0,\ldots,k-1
\big\}$.
Consequently, the positive norm-one lattice atoms of $L^p(\mu)$ are exactly $\chi_{\{\ast^{(0)}\}},\ldots,\chi_{\{\ast^{(k-1)}\}}$.
\end{prop}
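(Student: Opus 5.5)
The plan is to prove the two inclusions directly from the definition of lattice atom, working in the real vector lattice $L^p_\mathbb{R}(\mu)$, which is where the positive cone lives. Throughout I use the decomposition $X=\{\ast^{(0)},\ldots,\ast^{(k-1)}\}\sqcup\Omega$. Each singleton $\{\ast^{(j)}\}$ has measure $1$, while $\mu|_\Omega=m$ is non-atomic.

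First I show that each $c\chi_{\{\ast^{(j)}\}}$ with $c>0$ is an atom. Suppose $0\le v\le c\chi_{\{\ast^{(j)}\}}$ $\mu$-a.e. Then $v=0$ $\mu$-a.e. off $\{\ast^{(j)}\}$. Since $\mu(\{\ast^{(j)}\})=1>0$, the value $v(\ast^{(j)})$ is a well-defined number in $[0,c]$. Hence $v=\lambda c\chi_{\{\ast^{(j)}\}}$ with $\lambda=v(\ast^{(j)})/c\in[0,1]$.

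Next I show there are no other atoms. Let $u\ge0$ be nonzero. I split into two cases.

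\emph{Case 1: $u\chi_\Omega\neq0$.} Then $E=\{t\in\Omega:u(t)>0\}$ has $m(E)>0$. Because Lebesgue measure is non-atomic (for instance, by continuity of $x\mapsto m(E\cap(-\infty,x))$), I can choose a measurable $F\subseteq E$ with $0<m(F)<m(E)$. Put $v=u\chi_F$. Then $0\le v\le u$. If $v=\lambda u$, then comparing the two functions on $E\setminus F$, which has positive measure and where $v=0<u$, forces $\lambda=0$. Comparing on $F$, where $v=u>0$, forces $\lambda=1$. This is a contradiction, so $u$ is not an atom.

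\emph{Case 2: $u\chi_\Omega=0$.} Then $u=\sum_j u(\ast^{(j)})\chi_{\{\ast^{(j)}\}}$. If two indices $i\neq j$ have positive coefficients, take $v=u(\ast^{(i)})\chi_{\{\ast^{(i)}\}}$. The same comparison argument shows $v$ is not a multiple of $u$, so $u$ is not an atom. Hence exactly one coefficient is positive, and $u=c\chi_{\{\ast^{(j)}\}}$ for some $c>0$.

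For the consequence, I compute norms. For $p<\infty$, $\|c\chi_{\{\ast^{(j)}\}}\|_p=c\,\mu(\{\ast^{(j)}\})^{1/p}=c$. For $p=\infty$, the essential supremum is also $c$, since the singleton has positive measure. So the norm-one positive atoms are exactly the $\chi_{\{\ast^{(j)}\}}$ for $j=0,\ldots,k-1$.

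The only step with real content is the use of non-atomicity of $m$ in Case 1. Everything else is bookkeeping with $\mu$-a.e. equalities, where one needs to note that on the atoms "a.e." means pointwise, because each singleton has positive measure.
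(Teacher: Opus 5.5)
Your proof is correct and follows essentially the same route as the paper. Both arguments test the atom condition against truncations $u\chi_B$, use non-atomicity of Lebesgue measure to exclude any support in $\Omega$, and then exclude two distinct points $\ast^{(i)},\ast^{(j)}$ from the support; the paper merely packages this as showing that $\{u>0\}$ is a measure-theoretic atom, where you do an explicit case split.
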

\begin{proof}
For every $c>0$ and $j=0,\ldots,k-1$, the function
$c\chi_{\{\ast^{(j)}\}}$ is a lattice atom.
To verify this, if $0 \leq v \leq c\chi_{\{\ast^{(j)}\}}$, then $v=0$ $\mu\text{--a.e.}$ outside $\{\ast^{(j)}\}$, and hence $v=\lambda c\chi_{\{\ast^{(j)}\}}$ for some $\lambda\in[0,1]$.

Conversely, let $u\in L^p(\mu)^+$ be a lattice atom, and set $A:=\{x\in X:u(x)>0\}$.
If $B\subseteq A$ is measurable, then $0\leq u\chi_B\leq u$, so $u\chi_B=\lambda u$ for some $\lambda\in[0,1]$ by the definition of lattice atom.
Since $u>0$ on $A$, it follows that $\chi_B=\lambda$ $\mu\text{--a.e.}$ on $A$, and hence $\lambda\in\{0,1\}$.
Therefore, every measurable subset of $A$ is either null or co--null in $A$, and hence $A$ is a measure-theoretic atom.

Suppose that $\mu(A\cap\Omega)>0$.
Since the restriction of $\mu$ to $\Omega$ is the Lebesgue measure and is atomless, there exists a measurable set
$B\subseteq A\cap\Omega$ such that $0<\mu(B)<\mu(A\cap\Omega)$.
In particular, $0<\mu(B)<\mu(A)$, contradicting the fact that $A$ is an atom.
Therefore, $\mu(A\cap\Omega)=0$.
That is, $A$ cannot contain any portion of the atomless part $\Omega$.

Since $\mu(A)>0$, the set $A$ must contain at least one point among $\ast^{(0)},\ldots,\ast^{(k-1)}$.
It cannot contain two distinct such points, since one of the corresponding singletons would be a measurable subset of $A$ whose measure and whose complement in $A$ both have positive measure.
Hence, modulo a null set, $A=\{\ast^{(j)}\}$ for some $j\in\{0,\ldots,k-1\}$.
Consequently, $u=c\chi_{\{\ast^{(j)}\}}$ for some $c>0$.
Finally, since $\mu(\{\ast^{(j)}\})=1$, we have $\big\|c\chi_{\{\ast^{(j)}\}}\big\|_p=c$ for every $1\leq p\leq\infty$.
Therefore, the positive norm-one lattice atoms are precisely $\chi_{\{\ast^{(0)}\}},\ldots,\chi_{\{\ast^{(k-1)}\}}$.
\end{proof}

We use the following extension theorems for the positive unit spheres of $L^p$--spaces.

\begin{prop}[{\cite[Theorem 11(b) and Theorem 16]{LNW21}}]\label{prop:2.3}
    Let $p\in [1,\infty]$, and let $(\Gamma_1,\mathfrak{B}_1, \nu_1)$ and $(\Gamma_2, \mathfrak{B}_2, \nu_2)$ be arbitrary measure spaces.
    Suppose that $\Delta:S(L^p(\nu_1))^+\to S(L^p(\nu_2))^+$ is a surjective isometry.
    Then $\Delta$ can be extended (uniquely) to an isometric order isomorphism from $L^p(\nu_1)$ onto $L^p(\nu_2)$.
    When $p=\infty$, the extension is a $*$--isomorphism.
\end{prop}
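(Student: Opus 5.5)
This statement is quoted from \cite[Theorem 11(b) and Theorem 16]{LNW21}, so the natural move is to invoke that reference. Still, it is worth planning how one would reconstruct the argument, since its structure explains why the result holds for arbitrary measure spaces.

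\emph{Step 1: recover disjointness metrically.} For $1\le p<\infty$, two positive norm-one functions $f,g$ have disjoint supports if and only if $\|f-g\|_p=2^{1/p}$. Indeed, $|f-g|^p\le f^p+g^p$ pointwise for $f,g\ge0$, with equality a.e.\ exactly when $fg=0$ a.e. Hence $\Delta$ preserves disjointness in both directions. For $p=\infty$, disjointness is not characterized by the distance alone, so one instead works with the family of elements at distance $1$ from a given function. Through this family one characterizes the functions $\chi_A$, and maxima of functions, by metric conditions.

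\emph{Step 2: build a set-map and check additivity.} For $p<\infty$, the plan is to show that $\Delta$ maps normalized indicators $\chi_A/\nu_1(A)^{1/p}$ to normalized indicators $\chi_{\sigma(A)}/\nu_2(\sigma(A))^{1/p}$. This uses the metric characterization of indicators as the elements admitting the richest families of disjoint "halves", together with the fact that distances between such elements are determined by the measures of intersections. From this one obtains a measure-class-preserving Boolean isomorphism $\sigma$ between the measure algebras, and a weight $w$ with $\nu_2(\sigma(A))=\int_A w\,d\nu_1$. Next, one verifies that $\Delta(f+g)$ is the normalization of $\Delta f+\Delta g$ for disjoint $f,g$, using the disjointness-preservation from Step 1.

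\emph{Step 3: extend and identify the extension.} The weighted composition operator determined by $\sigma$ and $w$ gives an isometric lattice isomorphism $\Phi$. One then shows that $\Phi$ agrees with $\Delta$ on simple positive unit vectors, and by density and continuity on all of $S(L^p(\nu_1))^+$. Uniqueness holds because $S^+$ spans $L^p$ linearly. For $p=\infty$, $\sigma$ induces a unital order isomorphism, which is automatically a $*$-isomorphism.

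\emph{Main obstacle.} The hardest part is the case of non-$\sigma$-finite measures and the case $p=1$. When $p=1$, the distance $2$ does not force disjointness alone without the positivity argument, and the identification of indicators needs care. I would rely on the cited paper for these details rather than redo them.
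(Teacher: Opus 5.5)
Citing \cite[Theorem 11(b) and Theorem 16]{LNW21} is exactly what the paper does: Proposition~\ref{prop:2.3} is quoted without proof. As a proof by citation, your proposal therefore agrees with the paper. The reconstruction you sketch, however, has a step that fails.

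\textbf{Step 2.} For $1\le p<\infty$, a surjective isometry of positive unit spheres need not send normalized indicators to normalized indicators, so no metric characterization of indicators inside $S(L^p)^+$ exists. For instance, $(\Phi f)(x)=(2x)^{1/p}f(x^2)$ is a linear isometric order isomorphism of $L^p[0,1]$ onto itself (substitute $u=x^2$). Its restriction $\Delta$ is a surjective isometry of $S(L^p[0,1])^+$ onto itself, and it sends $\chi_{[0,1]}$ to $x\mapsto(2x)^{1/p}$, which is not a multiple of an indicator. Yet every metrically defined property is $\Delta$-invariant.

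Your Steps 2 and 3 are also mutually inconsistent. If $\chi_A/\nu_1(A)^{1/p}\mapsto\chi_{\sigma(A)}/\nu_2(\sigma(A))^{1/p}$ for all $A$, then linearity of the extension on disjoint unions forces $\nu_2(\sigma(A))/\nu_1(A)$ to be constant, so the weight $w$ could never be non-constant. Moreover, disjointness preservation alone controls only supports, not values. It therefore cannot give the disjoint additivity you claim at the end of Step 2.

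\textbf{What survives: supports plus restricted norms.} Step 1 yields a Boolean isomorphism $\sigma$ of supports. Let $h\in S^+$ and $a=\|h\chi_B\|_p$. The distance from $h$ to $\{k\in S^+: k=0 \text{ a.e. on } B\}$ equals $\bigl(a^p+(1-(1-a^p)^{1/p})^p\bigr)^{1/p}$, which is $2a$ when $p=1$. This is strictly increasing in $a$, hence $\|\Delta(h)\chi_{\sigma(B)}\|_p=\|h\chi_B\|_p$ for all $B$.

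Since $\Delta(h)\ge 0$, these restricted norms determine $\Delta(h)$. For a positive norm-one simple $h=\sum_i c_i\chi_{A_i}$ with disjoint $A_i$ and $c_i>0$, one gets $\Delta(h)=\rho^{1/p}\sum_i c_i\chi_{\sigma(A_i)}$. Here $\rho$ is the density, generally non-constant, of $C\mapsto\nu_1(\sigma^{-1}(C))$ with respect to $\nu_2$. This formula is linear in $h$, and density of simple functions plus continuity of $\Delta$ finish the argument.

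\textbf{The $p=1$ remark.} Your claim that $p=1$ is an obstacle for detecting disjointness contradicts your own Step 1. For $f,g\ge0$ one has $|f-g|\le f+g$, with equality iff $fg=0$, so $\|f-g\|_1=2$ does characterize disjointness on $S(L^1)^+$. Indicators are the right objects only for $p=\infty$, where the extension is a $*$-isomorphism and therefore maps projections $\chi_A$ to projections.
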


\begin{rmk}\label{rmk:2.4}
    A map $\widetilde{\Delta}:L^\infty(\nu_1)\to L^\infty(\nu_2)$ is a $*$--isomorphism if and only if it is a complex--linear isometric order isomorphism.
\end{rmk}
\begin{proof}
Recall that a $*$--isomorphism between commutative $C^*$--algebras is a complex--linear isometric order isomorphism.
It remains to prove the non--trivial direction.

By the Gelfand transform, $\widetilde{\Delta}$ induces uniquely a complex--linear isometric order isomorphism $\widetilde{\Phi}:C(\mathcal{M}_1)\to C(\mathcal{M}_2)$, where $\mathcal{M}_i$ is the maximal ideal space of $L^\infty(\nu_i)$ for $i=1,2$.
Since $\widetilde{\Phi}$ is a complex--linear surjective isometry, by the Banach-Stone theorem for continuous function spaces
\begin{equation}\label{eq:BST}
\widetilde{\Phi}(f)(y) = h(y)f(\eta(y))\qquad (f\in C(\mathcal{M}_1)),
\end{equation}
where $\eta:\mathcal{M}_2\to \mathcal{M}_1$ is a homeomorphism, and $|h(y)|=1$ for all $y\in \mathcal{M}_2$.
Since $\widetilde{\Phi}$ preserves the positive cone, we have $\widetilde{\Phi}(\mathds{1})\ge 0$, and hence $\widetilde{\Phi}(\mathds{1})(y) = h(y)\ge0$ for all $y\in \mathcal{M}_2$.
Here $\mathds{1}$ is the identity in $C(\mathcal{M}_1)$.
This implies $h \equiv 1$.
Therefore, the equality~\eqref{eq:BST} is written as 
$\widetilde{\Phi}(f)(y) = f(\eta(y))$ for all $f\in C(\mathcal{M}_1)$.
That is, $\widetilde{\Phi}(f) = f\circ\eta$ for all $f\in C(\mathcal{M}_1)$.
Then, we have $\widetilde{\Phi}(fg) = (fg)\circ\eta = (f\circ\eta)(g\circ\eta) = \widetilde{\Phi}(f)\widetilde{\Phi}(g)$, so $\widetilde{\Phi}$ is multiplicative.
Also $\widetilde{\Phi}(f^*)=\overline{f}\circ\eta = \overline{f\circ\eta} = \widetilde{\Phi}(f)^*$, the conjugation is preserved.
Therefore, $\widetilde{\Phi}$ is a $*$--isomorphism.
\end{proof}

For two ordered Banach spaces $W^{k,p}(\Omega_1)$ and $W^{k,p}(\Omega_2)$ having the same differentiability order $k\in\mathbb{N}$, we establish the following lemma.
\begin{lem}\label{lem:2.5}
    Let $L^p(\mu_1)$ and $L^p(\mu_2)$ be obtained from $W^{k,p}(\Omega_1)$ and $W^{k,p}(\Omega_2)$ by Proposition~\ref{prop:2.1}, respectively.
    Suppose $\widetilde{\Phi}:L^p(\mu_1)\to L^p(\mu_2)$ is arbitrary complex--linear isometric order isomorphism. 
    Then $\widetilde{\Phi}$ determines a unique permutation $\pi$ of $\{0,\ldots,k-1\}$ such that $\widetilde{\Phi}(u)(\ast_2^{(j)}) = u(\ast_1^{(\pi(j))})$ for all $u\in L^p(\mu_1)$.
    Moreover, there is a uniquely determined complex--linear isometric order isomorphism $\Lambda:L^p(\Omega_1)\to L^p(\Omega_2)$ .
\end{lem}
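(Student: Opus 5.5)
The plan is to exploit that $\widetilde{\Phi}$, being a linear order isomorphism between vector lattices, maps lattice atoms bijectively onto lattice atoms. It is also an isometry, so Proposition~\ref{prop:2.2} shows that it permutes the positive norm-one atoms. Concretely, for each $j\in\{0,\ldots,k-1\}$ the element $\widetilde{\Phi}^{-1}(\chi_{\{\ast_2^{(j)}\}})$ is a positive norm-one lattice atom of $L^p(\mu_1)$. It therefore equals $\chi_{\{\ast_1^{(\pi(j))}\}}$ for a unique index $\pi(j)$, and $\pi$ is a bijection because $\widetilde{\Phi}^{-1}$ is injective and $\widetilde{\Phi}$ maps atoms onto atoms. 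Uniqueness of $\pi$ is automatic once the formula $\widetilde{\Phi}(u)(\ast_2^{(j)}) = u(\ast_1^{(\pi(j))})$ holds: testing it on $u=\chi_{\{\ast_1^{(i)}\}}$ pins down $\pi$.

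To obtain that formula I will use disjointness preservation. A linear order isomorphism is a lattice isomorphism, so it preserves $|\cdot|$ and disjointness, both on the real parts and, via complexification, on the complex spaces. Decompose $u = \sum_{i} u(\ast_1^{(i)})\chi_{\{\ast_1^{(i)}\}} + u\chi_{\Omega_1}$. The summand $\chi_{\{\ast_1^{(\pi(j))}\}}$ is mapped to $\chi_{\{\ast_2^{(j)}\}}$. Every other summand is disjoint from $\chi_{\{\ast_1^{(\pi(j))}\}}$, so its image is disjoint from $\chi_{\{\ast_2^{(j)}\}}$ and vanishes at $\ast_2^{(j)}$. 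Evaluating at $\ast_2^{(j)}$ gives the claimed identity. If one prefers to avoid the lattice-homomorphism fact for $u\chi_{\Omega_1}$, note that for positive $v\perp w$ the positivity of $\widetilde{\Phi}$ gives $0\le \widetilde{\Phi}(v)\wedge\widetilde{\Phi}(w)$, and applying $\widetilde{\Phi}^{-1}$, which is also positive, yields disjointness. Complex $u$ is handled by splitting into real and imaginary positive and negative parts.

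For $\Lambda$, the same disjointness argument shows that $\widetilde{\Phi}$ maps the band $\{u: u=u\chi_{\Omega_1}\}\cong L^p(\Omega_1)$ into $\{v: v=v\chi_{\Omega_2}\}\cong L^p(\Omega_2)$. The reason is that this band is the disjoint complement of the atoms, and atoms correspond to atoms. Applying the same reasoning to $\widetilde{\Phi}^{-1}$ shows the map is onto. I then set $\Lambda(g) := \widetilde{\Phi}(g\chi_{\Omega_1})|_{\Omega_2}$. This $\Lambda$ is complex-linear, positive in both directions, and bijective. It is isometric because $\|g\chi_{\Omega_1}\|_{L^p(\mu_1)}=\|g\|_{L^p(\Omega_1)}$, and similarly on the target side. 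It is uniquely determined by $\widetilde{\Phi}$ through this restriction formula, and $\widetilde{\Phi}$ then decomposes as the permutation on atomic coordinates plus $\Lambda$ on the diffuse part.

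The main obstacle I expect is justifying rigorously that an order isomorphism of the complex $L^p$ spaces preserves disjointness, including for complex-valued functions. The only given hypothesis is that the real positive cones correspond. I would handle this by restricting to the real parts, which is legitimate because $\widetilde{\Phi}$ maps real elements to real ones, being a difference of positives. There I would invoke the standard fact that a bijective positive map with positive inverse between vector lattices is a lattice isomorphism, and then extend by complex linearity.
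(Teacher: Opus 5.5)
Your proposal is correct and follows essentially the same route as the paper. Proposition~\ref{prop:2.2} gives the permutation $\pi$ of the norm-one atoms, and preservation of disjointness/bands gives $\widetilde{\Phi}(u)(\ast_2^{(j)})=u(\ast_1^{(\pi(j))})$; $\Lambda$ is then the restriction of $\widetilde{\Phi}$ to the diffuse band $L^p(\Omega_1)$, with surjectivity from the same argument applied to $\widetilde{\Phi}^{-1}$, and your explicit disjointness argument (positivity of $\widetilde{\Phi}^{-1}$, splitting complex $u$ into positive parts) fills in a step the paper only asserts.
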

\begin{proof}
By Proposition~\ref{prop:2.2}, $\{\chi_{\ast_i^{(j)}}\}$ are the norm one lattice atoms in $L^p(\mu_i)$.
Since an isometric order isomorphism maps norm one lattice atoms onto norm one lattice atoms, it permutes the norm one atoms.
Consequently, there exists a unique permutation $\pi$ of $\{0,\ldots,k-1\}$ such that 
\begin{equation}\label{eq:atom-permutation}
\widetilde{\Phi}(\chi_{\{\ast_1^{\pi(j)}\}})=\chi_{\{\ast_2^{(j)}\}},\qquad j=0,\ldots,k-1.
\end{equation}
An order isomorphism preserves bands and their
disjoint complements, hence, for every $u\in L^p(\mu_1)$,
$$(\widetilde{\Phi}u)(\ast_2^{(j)})\chi_{\{\ast_2^{(j)}\}}=\widetilde{\Phi}\bigl(u(\ast_1^{(\pi(j))})\chi_{\{\ast_1^{\pi(j)}\}}\bigr)=u(\ast_1^{(\pi(j))}) \chi_{\{\ast_2^{(j)}\}},
$$
for each $j=0,\ldots,k-1$.
It follows that $\widetilde{\Phi}(u)(\ast_2^{(j)})=u(\ast_1^{(\pi(j))})$ for all $j=0,\ldots,k-1$.

Since $\widetilde{\Phi}(u)(\ast_2^{(j)})=u(\ast_1^{(\pi(j))})$ for all $u\in L^p(\mu_1)$ and $j=0,\ldots,k-1$, we have $\widetilde{\Phi}(L^p(\Omega_1))\subseteq L^p(\Omega_2)$.
Applying the same argument to $\widetilde{\Phi}^{-1}$, we conclude $\widetilde{\Phi}(L^p(\Omega_1))= L^p(\Omega_2)$.
Define $\Lambda:=\widetilde{\Phi}|_{L^p(\Omega_1)}$.
Since $\widetilde{\Phi}$ is a complex--linear isometric order isomorphism which preserves $\{\ast_i^{(0)},\ldots,\ast_i^{(k-1)}\}$, $\Lambda$ is a complex--linear isometric order isomorphism.
We conclude the required statement.
\end{proof}
We now give an affirmative answer to Problem~\ref{Tingley} for the spaces $W^{k,p}(\Omega)$.

\begin{thm}\label{thm:2.6}
    Suppose $1\le p \le \infty$, and $k\in\mathbb{N}$.
    Let $\Delta:S(W^{k,p}(\Omega_1))^+\to S(W^{k,p}(\Omega_2))^+$ be a surjective isometry.
    Then there exists a unique complex--linear isometric order isomorphism $\widetilde{\Delta}:W^{k,p}(\Omega_1) \to W^{k,p}(\Omega_2)$ such that $\widetilde{\Delta}|_{S(W^{k,p}(\Omega_1))^+} = \Delta$.
    More precisely,
    $$
    (\widetilde{\Delta}f)(x)=\sum_{j=0}^{k-1}f^{(\pi(j))}(x_{1})\frac{(x-x_{2})^j}{j!}+\frac{1}{(k-1)!}\int_{x_{2}}^{x}(x-t)^{k-1}\Lambda(f^{(k)})(t)\,dt,
    \qquad x\in\Omega_{2},
    $$
    where $\pi$ is a unique permutation of $\{0,\ldots,k-1\}$, and $\Lambda:L^p(\Omega_1)\to L^p(\Omega_2)$ is a unique complex--linear isometric order isomorphism, determined by $\Delta$.
\end{thm}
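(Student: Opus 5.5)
We transport the problem to $L^p$--spaces via Proposition~\ref{prop:2.1}. Let $J_i:W^{k,p}(\Omega_i)\to L^p(\mu_i)$ be the complex--linear isometric order isomorphisms constructed there, with base points $x_i$. Since $J_i$ is an isometric order isomorphism, it maps $S(W^{k,p}(\Omega_i))^+$ bijectively onto $S(L^p(\mu_i))^+$. Then
$$T:=J_2\circ\Delta\circ J_1^{-1}:S(L^p(\mu_1))^+\to S(L^p(\mu_2))^+$$
is a surjective isometry. By Proposition~\ref{prop:2.3}, $T$ extends uniquely to an isometric order isomorphism $\widetilde{\Phi}:L^p(\mu_1)\to L^p(\mu_2)$. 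For $p=\infty$ this extension is a $*$--isomorphism, hence complex--linear by Remark~\ref{rmk:2.4}. For $p<\infty$ the extension in \cite{LNW21} is linear, and complex linearity follows by complexifying the real--linear positive part, since the cones generate. We then set $\widetilde{\Delta}:=J_2^{-1}\circ\widetilde{\Phi}\circ J_1$. It is a complex--linear isometric order isomorphism, being a composition of such maps, and it restricts to $\Delta$ on the positive sphere.

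For uniqueness, suppose $\Psi$ is another complex--linear isometric order isomorphism extending $\Delta$. Then $J_2\Psi J_1^{-1}$ extends $T$, so uniqueness in Proposition~\ref{prop:2.3} forces $J_2\Psi J_1^{-1}=\widetilde{\Phi}$. The same conclusion can be reached directly: every element of $W^{k,p}(\Omega_1)$ is a complex combination of positive elements, each a positive multiple of an element of $S(W^{k,p}(\Omega_1))^+$, so linearity pins $\Psi$ down on the whole space.

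For the coordinate formula we apply Lemma~\ref{lem:2.5} to $\widetilde{\Phi}$. This yields a permutation $\pi$ with $\widetilde{\Phi}(u)(\ast_2^{(j)})=u(\ast_1^{(\pi(j))})$, and an isometric order isomorphism $\Lambda=\widetilde{\Phi}|_{L^p(\Omega_1)}$ onto $L^p(\Omega_2)$. Take $f\in W^{k,p}(\Omega_1)$ and decompose
$$J_1f=\sum_j f^{(j)}(x_1)\chi_{\{\ast_1^{(j)}\}}+f^{(k)}\chi_{\Omega_1}.$$
Linearity then gives
$$\widetilde{\Phi}(J_1f)=\sum_j f^{(\pi(j))}(x_1)\chi_{\{\ast_2^{(j)}\}}+\Lambda(f^{(k)})\chi_{\Omega_2}.$$
Inverting $J_2$ through the explicit surjectivity construction~\eqref{eq:Sobolev} at base point $x_2$, with $a_j=f^{(\pi(j))}(x_1)$ and $g=\Lambda(f^{(k)})$, produces exactly the displayed formula. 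Uniqueness of $\pi$ and $\Lambda$ follows because both are read off from $\widetilde{\Phi}$, which is unique.

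The main obstacle is verifying that the extension from Proposition~\ref{prop:2.3} is complex--linear for $1\le p<\infty$, since the cited result is phrased for order isomorphisms. The plan is to argue that an additive, positively homogeneous map on the generating cone extends uniquely to a real--linear map on the real parts, and then to a complex--linear map by complexification. A second point to check is that the lemma's restriction $\Lambda$ really lands in functions supported on $\Omega_2$; this follows from the band argument already given in the proof of Lemma~\ref{lem:2.5}.
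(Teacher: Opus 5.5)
Your proposal is correct and follows essentially the same route as the paper: transport $\Delta$ to $L^p(\mu_i)$ via the maps $J_i$ of Proposition~\ref{prop:2.1}, extend by Proposition~\ref{prop:2.3} (with Remark~\ref{rmk:2.4}), pull back, get uniqueness from the uniqueness in Proposition~\ref{prop:2.3}, and read off $\pi$ and $\Lambda$ from Lemma~\ref{lem:2.5} combined with formula~\eqref{eq:Sobolev} at the base point $x_2$. Your extra remarks on complex-linearity for $p<\infty$ and the direct uniqueness argument via the generating cone address points the paper takes directly from Proposition~\ref{prop:2.3}; for the complexified map to also be isometric, note that a real order isomorphism of $L^p$-spaces is a lattice isomorphism and hence preserves the moduli $|u+iv|$.
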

\begin{proof}
For $i=1,2$, following Proposition~\ref{prop:2.1}, we define a complex--linear isometric order isomorphism $J_i:W^{k,p}(\Omega_i)\to L^p(\mu_i)$. 
In particular, we have $J_i(S(W^{k,p}(\Omega_i))^+) = S(L^p(\mu_i))^+$.
Define a map $\Phi:=J_2\circ\Delta\circ J_1^{-1}: S(L^p(\mu_1))^+\to S(L^p(\mu_2))^+$.
Then $\Phi$ is a surjective isometry.
By Proposition~\ref{prop:2.3} along with Remark~\ref{rmk:2.4}, there exists a unique complex--linear isometric order isomorphism $\widetilde{\Phi}: L^p(\mu_1)\to L^p(\mu_2)$ such that $\widetilde{\Phi}|_{S(L^p(\mu_1))^+} = \Phi$.
Define $\widetilde{\Delta}:= J_2^{-1}\circ\widetilde{\Phi}\circ J_1:W^{k,p}(\Omega_1)\to W^{k,p}(\Omega_2)$.
Then $\widetilde{\Delta}$ is a complex--linear isometric order isomorphism as $J_1$, $J_2$ and $\widetilde{\Phi}$ are.

Now we verify $\widetilde{\Delta}|_{S(W^{k,p}(\Omega_1))^+} = \Delta$.
For $f\in S(W^{k,p}(\Omega_1))^+$, since $J_1(f)\in S(L^p(\mu_1))^+$, and hence $\Phi$ agrees with $\widetilde{\Phi}$ at $J_1(f)$, we have $\widetilde{\Delta}f = (J_2^{-1}\widetilde{\Phi}J_1)f =
J_2^{-1}\widetilde{\Phi}(J_1f) = J_2^{-1}\Phi(J_1f) = (J_2^{-1}\Phi J_1)f = \Delta f$.
Therefore, $\widetilde{\Delta}|_{S(W^{k,p}(\Omega_1))^+} = \Delta$.

We next prove the asserted representation.
Let $f\in W^{k,p}(\Omega_1)$.
Since $J_2(\widetilde{\Delta}f)=\widetilde{\Phi}(J_1f)$, Lemma~\ref{lem:2.5} gives a unique permutation $\pi$ of $\{0,\ldots,k-1\}$ and an induced complex--linear isometric order isomorphism $\Lambda:L^p(\Omega_1)\to L^p(\Omega_2)$.
Hence
$$
(\widetilde{\Delta}f)^{(j)}(x_2)=\bigl(J_2(\widetilde{\Delta}f)\bigr)(\ast_2^{(j)})=\widetilde{\Phi}(J_1f)(\ast_2^{(j)})=(J_1f)(\ast_1^{(\pi(j))})=f^{(\pi(j))}(x_1),
$$
for every $j=0,\ldots,k-1$.
Applying the equality~\eqref{eq:Sobolev} at the base point $x_2$, we obtain
$$
(\widetilde{\Delta}f)(x)=\sum_{j=0}^{k-1}(\widetilde{\Delta}f)^{(j)}(x_2)\frac{(x-x_2)^j}{j!}+\frac{1}{(k-1)!}\int_{x_2}^{x}(x-t)^{k-1}(\widetilde{\Delta}f)^{(k)}(t)\,dt.
$$
Substituting $\pi$ and $\Lambda$ into the preceding identities yields
$$(\widetilde{\Delta}f)(x)=\sum_{j=0}^{k-1}f^{(\pi(j))}(x_1)\frac{(x-x_2)^j}{j!}+\frac{1}{(k-1)!}\int_{x_2}^{x}(x-t)^{k-1}\Lambda(f^{(k)})(t)\,dt,\qquad x\in\Omega_2.$$

Finally, we show the uniqueness.
Suppose that there is another complex--linear isometric order isomorphism $\overline{\Delta}:W^{k,p}(\Omega_1)\to W^{k,p}(\Omega_2)$ such that $\overline{\Delta}|_{S(W^{k,p}(\Omega_1))^+}=\Delta$.
Then $\overline{\Phi}:=J_2\circ\overline{\Delta}\circ J_1^{-1}:L^p(\mu_1)\to L^p(\mu_2)$ is a complex--linear isometric order isomorphism such that the restriction $\overline{\Phi}|_{S(L^p(\mu_1))^+}$ is $\Phi$.
By the uniqueness of Proposition~\ref{prop:2.3} and Remark~\ref{rmk:2.4}, $\widetilde{\Phi} = \overline{\Phi}=J_2\circ\overline{\Delta}\circ J_1^{-1}$.
Therefore, $\widetilde{\Delta}=J_2^{-1}\circ\widetilde{\Phi}\circ J_1=\overline{\Delta}$.
\end{proof}

\begin{rmk}\label{rmk:2.7}
The base points used in the norms of $W^{k,p}(\Omega_1)$ and $W^{k,p}(\Omega_2)$ are fixed but not specific.
Different choices of base points give rise to complex--linearly isometric and order--isomorphic ordered Banach spaces.
Hence the conclusion of Theorem~\ref{thm:2.6} does not depend on the particular choice of base points.   
\end{rmk}

It is natural to ask whether the preceding results admit an analogue for Sobolev spaces on higher--dimensional domains.
Such an extension, however, does not follow directly from the present argument.
For an interval, the mapping $f \mapsto\bigl(f(x_{0}),\ldots,f^{(k-1)}(x_{0}),f^{(k)}\bigr)$ identifies $W^{k,p}$ with an $L^{p}$--space augmented by finitely many atomic coordinates.
For a Lipschitz domain in $\mathbb{R}^{d}$, $d\ge2$, the weak derivatives of order $k$ are constrained by compatibility relations among mixed derivatives, while lower--order point evaluations are not generally available.
Consequently, the $L^{p}$--space representation used in the present proof does not extend directly to the higher--dimensional setting.
Establishing an analogous result would therefore require a different representation of the ordered Sobolev space and its positive unit sphere.

\section{The differentiability order as an order--geometric invariant}
Having established unique extension for arbitrary surjective isometries, we now determine when a surjective isometry can exist between the positive unit spheres corresponding to possibly different differentiability orders.

\begin{thm}\label{thm:2.8}
Let $k_1,k_2\in\mathbb{N}$.
For each $1 \leq p \leq \infty$, the following statements are equivalent:
\begin{enumerate}
    \item $k_1=k_2$.
    \item There exists a complex--linear isometric order isomorphism 
    $$\widetilde{\Delta}\colon W^{k_1,p}(\Omega_1)\to W^{k_2,p}(\Omega_2).$$
    \item There exists a surjective isometry  
    $\Delta\colon S(W^{k_1,p}(\Omega_1))^+\to S(W^{k_2,p}(\Omega_2))^+.$
    \item There exists a surjective phase--isometry  
    $T\colon S(W^{k_1,p}(\Omega_1))^+\to S(W^{k_2,p}(\Omega_2))^+.$
\end{enumerate}
Additionally, if $1<p<\infty$,
    \begin{enumerate}
        \item[(5)] There exists a surjective norm--additive map  
    $\varphi\colon S(W^{k_1,p}(\Omega_1))^+\to S(W^{k_2,p}(\Omega_2))^+.$ 
    \end{enumerate}
In particular, every map in (3) or (4) admits a
unique extension as in (2) with the form expressed in Theorem~\ref{thm:2.6}.
If $1<p<\infty$, the same conclusion holds for every map in (5).
\end{thm}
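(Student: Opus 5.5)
We prove the implications $(1)\Rightarrow(2)\Rightarrow(3)\Rightarrow(1)$, then $(2)\Rightarrow(4)\Rightarrow(3)$, and, for $1<p<\infty$, $(2)\Rightarrow(5)\Rightarrow(3)$. The uniqueness and representation statements then follow from Theorem~\ref{thm:2.6}.

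\textbf{The linear and forward implications.} For $(1)\Rightarrow(2)$, take $\widetilde{\Delta}:=J_2^{-1}\circ\Psi\circ J_1$. Here $J_i$ are the maps of Proposition~\ref{prop:2.1}, and $\Psi:L^p(\mu_1)\to L^p(\mu_2)$ fixes the atomic coordinates, $\Psi(u)(\ast_2^{(j)})=u(\ast_1^{(j)})$, and transports the continuous part by an affine change of variables $\tau:\Omega_2\to\Omega_1$ with weight $|\tau'|^{1/p}$ (no weight if $p=\infty$). This $\Psi$ is clearly a complex--linear isometric order isomorphism.
\begin{itemize}
\item $(2)\Rightarrow(3)$: restrict $\widetilde{\Delta}$. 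A linear isometric order isomorphism maps $S(\cdot)^+$ onto $S(\cdot)^+$.
\item $(2)\Rightarrow(4)$: the restriction is trivially a phase--isometry, since linearity gives $\|\widetilde{\Delta}f-\alpha\widetilde{\Delta}g\|=\|f-\alpha g\|$.
\item $(2)\Rightarrow(5)$: the restriction is trivially norm--additive, for the same reason.
\end{itemize}

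\textbf{The key step $(3)\Rightarrow(1)$.} Conjugating by $J_1,J_2$ gives a surjective isometry $S(L^p(\mu_1))^+\to S(L^p(\mu_2))^+$. Proposition~\ref{prop:2.3} (with Remark~\ref{rmk:2.4} when $p=\infty$) extends it to a complex--linear isometric order isomorphism $\widetilde{\Phi}:L^p(\mu_1)\to L^p(\mu_2)$. Since order isomorphisms map lattice atoms bijectively onto lattice atoms, and being isometric they preserve norm, $\widetilde{\Phi}$ maps the norm-one positive lattice atoms of $L^p(\mu_1)$ bijectively onto those of $L^p(\mu_2)$. By Proposition~\ref{prop:2.2} these sets have cardinalities $k_1$ and $k_2$, so $k_1=k_2$. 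This counting argument is the heart of the rigidity. It uses only that $\widetilde{\Phi}$ is bijective and preserves order in both directions, together with the atomless nature of Lebesgue measure on $\Omega_i$ from Proposition~\ref{prop:2.2}.

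\textbf{The main obstacle: $(4)\Rightarrow(3)$ and $(5)\Rightarrow(3)$.} Here I would again transfer everything to $L^p(\mu_i)$ via $J_i$.

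For $(4)\Rightarrow(3)$, take $u,v\ge0$. I would show that $\min_{\alpha\in\mathbb{T}}\|u-\alpha v\|_p=\|u-v\|_p$, attained at $\alpha=1$, using the pointwise inequality $|a-\alpha b|\ge|a-b|$ for $a,b\ge0$. Similarly $\max_{\alpha}\|u-\alpha v\|_p=\|u+v\|_p$. The set equality defining a phase--isometry then yields $\|Tu-Tv\|=\|u-v\|$, since the images are again positive. So $T$ is an isometry, hence in (3).

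For $(5)\Rightarrow(3)$ with $1<p<\infty$, I would invoke the known results that surjective norm--additive maps between positive unit spheres of $L^p$ spaces are isometries; see~\cite{HDL22, SS24, ZTW19, ZTW21}. If a direct argument is needed, I would use strict convexity: $\|u+v\|=2$ iff $u=v$, so the map is injective. I would then try to recover $\|u-v\|$ from the quantities $\|u+w\|$, $w\in S^+$, for instance via disjointness characterized by $\|u+v\|^p=2$. This is the step I expect to be hardest, and I would rely on the cited literature for it.

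Finally, once (3) holds, $k_1=k_2$, and Theorem~\ref{thm:2.6} gives the unique extension with the stated form for maps in (3). This covers maps in (4) and (5) as well, since we have shown that such maps are isometries.
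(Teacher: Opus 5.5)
Your proposal is correct and follows essentially the paper's route. The shared steps are: conjugation by $J_1,J_2$; extension via Proposition~\ref{prop:2.3} and Remark~\ref{rmk:2.4}; counting the norm-one lattice atoms from Proposition~\ref{prop:2.2} to get $k_1=k_2$; the affine change of variables for $(1)\Rightarrow(2)$; and the pointwise inequality $|a-\alpha b|\ge|a-b|$ with the minimum over $\mathbb{T}$ for $(4)\Rightarrow(3)$. The only difference concerns (5): you pass through (3), whereas the paper goes directly to (2). After proving injectivity by strict convexity, as you suggest, the paper applies \cite[Theorem 2.6]{HDL22} to the bijective norm-additive map on the finite measure spaces $(X_i,\mu_i)$, which yields a real-linear isometric order isomorphism that it then complexifies. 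So the step you leave to the literature is the same one the paper also delegates to that theorem.
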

\begin{proof}
$(3)\Rightarrow(2)$ 
For $i=1,2$, let $J_i:W^{k_i,p}(\Omega_i)\to L^p(\mu_i)$ be the map constructed in Proposition~\ref{prop:2.1}.
Then $\Phi:= J_2\circ\Delta\circ J_1^{-1}:S(L^p(\mu_1))^+\to S(L^p(\mu_2))^+$ is a surjective isometry.
By Proposition~\ref{prop:2.3} along with Remark~\ref{rmk:2.4}, there exists a unique complex--linear isometric order--isomorphic extension $\widetilde{\Phi}:L^p(\mu_1)\to L^p(\mu_2)$.
Thus, there is a complex--linear isometric order isomorphism $\widetilde{\Delta}:= J_2^{-1}\circ\widetilde{\Phi}\circ J_1: W^{k_1,p}(\Omega_1)\to W^{k_2,p}(\Omega_2)$.

$(2)\Rightarrow(1)$
For each $i=1,2$, let $J_i$ be the complex--linear isometric order isomorphism from Proposition~\ref{prop:2.1}, and put $\widetilde{\Phi}=J_2\circ\widetilde{\Delta}\circ J_1^{-1}\colon L^p(\mu_1)\to L^p(\mu_2)$.
Then $\widetilde{\Phi}$ is a complex--linear isometric order isomorphism.
By Proposition~\ref{prop:2.2},  $\widetilde{\Phi}$ induces a bijection
$$
\left\{
\chi_{\{\ast_1^{(j)}\}}:j=0,\ldots,k_1-1
\right\}
\to
\left\{
\chi_{\{\ast_2^{(j)}\}}:j=0,\ldots,k_2-1
\right\}.
$$
These two finite sets have the same cardinality, and hence $k_1=k_2$.

$(1)\Rightarrow (2)$ Suppose that $k_1=k_2=:k$. Write $\Omega_i=(a_i,b_i)$ and $\ell_i=b_i-a_i$ for $i=1,2$.
Define an affine bijection
$$
\tau\colon\Omega_2\longrightarrow\Omega_1,\qquad\text{by}\qquad
\tau(x)=a_1+\frac{\ell_1}{\ell_2}(x-a_2),\quad x\in\Omega_2.
$$
For each $g\in L^p(\Omega_1)$, define
\begin{align*}
(\Lambda g)(x)=
\begin{cases}
(\frac{\ell_1}{\ell_2})^{1/p}g(\tau(x))\quad&(1\le p<\infty)\\
g(\tau(x))\quad&(p=\infty)
\end{cases}
,\qquad x\in \Omega_2.
\end{align*}
Then $\Lambda\colon L^p(\Omega_1)\to L^p(\Omega_2)$ is a complex--linear isometric order isomorphism.
In particular, if $1\leq p<\infty$, then a change of variables gives
$$
\|\Lambda g\|_p^p=\int_{\Omega_2}
\frac{\ell_1}{\ell_2}\,|g(\tau(x))|^p\,dx=\int_{\Omega_1}|g(t)|^p\,dt=\|g\|_p^p.
$$
The case $p=\infty$ is immediate.
Moreover, both $\Lambda$ and $\Lambda^{-1}$ preserve positivity.

Let $(X_i,\mathfrak{A}_i,\mu_i)$ be the measure spaces associated with $W^{k,p}(\Omega_i)$ obtained by Proposition~\ref{prop:2.1}. 
Define
$$
\widetilde{\Phi}\colon L^p(\mu_1)\to L^p(\mu_2),\qquad\text{by}\qquad
\widetilde{\Phi}
\left(
\sum_{j=0}^{k-1}\alpha_j\chi_{\{\ast_1^{(j)}\}}+g\chi_{\Omega_1}
\right) 
= \sum_{j=0}^{k-1}\alpha_j\chi_{\{\ast_2^{(j)}\}}+(\Lambda g)\chi_{\Omega_2}.
$$
It follows directly from the definition of the $L^p$-norm on the atomic and atomless parts that $\widetilde{\Phi}$ is a complex--linear isometric order isomorphism.
Define $\widetilde{\Delta}=J_2^{-1}\circ\widetilde{\Phi}\circ J_1
\colon W^{k,p}(\Omega_1)\to W^{k,p}(\Omega_2)$.
Then $\widetilde{\Delta}$ is a complex--linear isometric order isomorphism, as $J_1,J_2$ and $\widetilde{\Phi}$ are.
This proves the equivalence of $(1)$ and $(2)$.
As a consequence, we assume $k_1=k_2$ in (2).

$(2)\Rightarrow(3)$ This follows directly from Definition~\ref{dfn:2.1}(3).

$(2)\Rightarrow (4)$ 
For every $f,g\in S(W^{k,p}(\Omega_1))^+$ and each $\alpha\in\mathbb{T}$, we have
\begin{align*}
\|\widetilde{\Delta}(f) - \alpha\widetilde{\Delta}(g)\|_{k,p}= \|\widetilde{\Delta}(f-\alpha g)\|_{k,p}=\|f-\alpha g\|_{k,p}.
\end{align*}
Thus, 
$$\big\{\|\widetilde{\Delta}(f) - \alpha\widetilde{\Delta}(g)\|_{k,p}:\alpha\in\mathbb{T}\big\} = \big\{\|f-\alpha g\|_{k,p}:\alpha\in\mathbb{T}\big\}.$$
Therefore, we conclude that $\Delta:=\widetilde{\Delta}|_{S(W^{k,p}(\Omega_1))^+}$ is a phase--isometry.

$(4)\Rightarrow (3)$
For $f$ and $g$ in $S(W^{k_1,p}(\Omega_1))^+$, we have the following identity
$$|f^{(j)}(x_1) - \alpha g^{(j)}(x_1)|^2 - |f^{(j)}(x_1) - g^{(j)}(x_1)|^2 = 2f^{(j)}(x_1)g^{(j)}(x_1)(1-\text{Re}(\alpha)),$$
for all $\alpha\in\mathbb{T}$ and $j=0,\ldots,k_1-1$.
Since $f^{(j)}(x_1)\ge0$ and $g^{(j)}(x_1)\ge 0$, as well as $\text{Re}(\alpha)\le 1$, the above identity is non--negative.
Hence 
\begin{equation}\label{eq:4.1}
|f^{(j)}(x_1) - g^{(j)}(x_1)|\le|f^{(j)}(x_1) - \alpha g^{(j)}(x_1)|,\qquad j=0,\ldots,k_1-1,\quad\alpha\in\mathbb{T}.
\end{equation}
The same argument applies to the $k_1$--th weak derivatives.
We have $|f^{(k_1)}(t) - g^{(k_1)}(t)|\le|f^{(k_1)}(t) - \alpha g^{(k_1)}(t)|$ for almost every $t\in\Omega_1$.
Thus, 
\begin{equation}\label{eq:4.2}
\|f^{(k_1)} - g^{(k_1)}\|_p\le \|f^{(k_1)} - \alpha g^{(k_1)}\|_p.
\end{equation}
Substituting inequalities~\eqref{eq:4.1} and \eqref{eq:4.2} into the definition of the $W^{k,p}$--norm~\eqref{W_norm}, we have $\|f - g\|_{k_1,p}\le \|f-\alpha g\|_{k_1,p}$ for each $\alpha\in\mathbb{T}$.
Similarly, we have $\|T(f) - T(g)\|_{k_2,p}\le \|T(f)-\alpha T(g)\|_{k_2,p}$ for such $f$ and $g$.
Then
\begin{align*}
    \|T(f) - T(g)\|_{k_2,p}&=\min_{\alpha\in\mathbb{T}}\|T(f) - \alpha T(g)\|_{k_2,p}\\
    &=\min_{\alpha\in\mathbb{T}}\|f-\alpha g\|_{k_1,p}\\
    &=\|f - g\|_{k_1,p}.
\end{align*}
The second equality above follows from the fact that $T$ is a phase--isometry.
Note that both maps $\alpha\mapsto\|f-\alpha g\|_{k_1,p}$ and $\alpha\mapsto\|T(f)-\alpha T(g)\|_{k_2,p}$ are both continuous on the compact set $\mathbb{T}$, so their minima are attained.
Thus, $T$ is a surjective isometry.

We have thus proved that (1), (2), (3), and (4) are equivalent.
Now consider $1<p<\infty$, let $\varphi$ be a map in $(5)$.
First, we prove a surjective norm--additive map is necessarily bijective.
Recall that $L^p$--spaces are strictly convex for each $1<p<\infty$, and hence so is $W^{k_i,p}(\Omega_i)$ by Proposition~\ref{prop:2.1}.
Assume $f, g\in S(W^{k_1,p}(\Omega_1))^+$ with $\varphi(f)=\varphi(g)$, then we have
\begin{align*}
\|f+g\|_{k_1,p} = \|\varphi(f)+\varphi(g)\|_{k_2,p}=\|2\varphi(f)\|_{k_2,p}=2=\|f\|_{k_1,p} + \|g\|_{k_1,p},
\end{align*}
where the last equality follows from $f,g$ being in the positive unit sphere.
However, the space $W^{k_1,p}(\Omega_1)$ is strictly convex.
The equality $\|f+g\|_{k_1,p} = \|f\|_{k_1,p}+\|g\|_{k_1,p}$ forces $f=g$, and hence $\varphi$ is injective.

$(5)\Rightarrow (2)$
Now define $\Phi:=J_2\circ\varphi\circ J_1^{-1}: S(L^p(\mu_1))^+\to S(L^p(\mu_2))^+$, we have
\begin{align*}
    \|\Phi(f)+\Phi(g)\|_{L^p(\mu_2)} &= \|J_2\big(\varphi J_1^{-1}(f) + \varphi J_1^{-1}(g)\big)\|_{L^p(\mu_2)}\\
    &= \|\varphi J_1^{-1}(f) + \varphi J_1^{-1}(g)\|_{k_2,p}\\
    &= \|J_1^{-1}(f) + J_1^{-1}(g)\|_{k_1,p}\\
    &= \|J_1^{-1}(f+g)\|_{k_1,p}\\
    &= \|f+g\|_{L^p(\mu_1)},
\end{align*}
for each $f,g\in S(L^p(\mu_1))^+$.
Hence $\Phi$ is a bijective norm--additive map.
By~\cite[Theorem 2.6]{HDL22}, there exists a real--linear isometric order isomorphism $L_\mathbb{R}^p(\mu_1)\to L_\mathbb{R}^p(\mu_2)$, and hence its unique complexification $\widetilde{\Phi}:L^p(\mu_1)\to L^p(\mu_2)$.
(Notice that $\mu_i(X_i) = m(\Omega_i) + k_i<\infty$, and hence a finite measure.)
Then, the map $\widetilde{\varphi}:=J_2^{-1}\circ\widetilde{\Phi}\circ J_1: W^{k_1,p}(\Omega_1)\to W^{k_2,p}(\Omega_2)$ is a complex--linear isometric order isomorphism.
Uniqueness follows immediately.

$(4)\Rightarrow (5)$ The proof is essentially the same as the argument in $(4)\Rightarrow (3)$ with the following equalities replacing the minima argument:
\begin{align*}
\|T(f)+T(g)\|_{k,p} = \max_{\alpha\in\mathbb{T}}\|T(f)-\alpha T(g)\|_{k,p} =\max_{\alpha\in\mathbb{T}}\|f-\alpha g\|_{k,p} =\|f+g\|_{k,p}.
\end{align*}
And also the following identity, 
$$|f^{(j)}(x_1) + g^{(j)}(x_1)|^2 - |f^{(j)}(x_1) - \alpha g^{(j)}(x_1)|^2 = 2f^{(j)}(x_1)g^{(j)}(x_1)(1+\text{Re}(\alpha)).$$
Note that (1) holds in (4).
Therefore, the required equivalence statements are proved.
\end{proof}
\begin{rmk}\label{rmk:2.9}
The following observations follow from Theorem~\ref{thm:2.8}.
\begin{enumerate}
    \item The first four statements imply (5) for each $1\le p\le\infty$.
However, in the converse direction, two endpoints $p=1$ and $p=\infty$ are excluded in the current argument.
    \item The implications from statement (2) to statements (3), (4), and (5) are valid for arbitrary ordered Banach spaces.
\end{enumerate}
\end{rmk}
The preceding result shows that the Sobolev order is determined by the metric structure of the positive unit sphere.
This phenomenon arises because the anchored order selects a metric subset that records the atomic coordinates rather than merely the linear structure of the underlying Sobolev space.
Indeed, when $p=2$, the anchored Sobolev space $W^{k,2}(\Omega)$ is linearly isometric to
$\mathbb{C}^{k}\oplus_{2}L^{2}(\Omega)$, and hence, as an infinite-dimensional separable Hilbert space, it is linearly isometric to $W^{\ell,2}(\Omega')$ for every $\ell\geq 1$.
Nevertheless, the positive unit spheres of $W^{k,2}(\Omega)$ and $W^{\ell,2}(\Omega')$ cannot be surjectively isometric unless $k=\ell$.
Thus, the positive unit sphere retains the number of atomic coordinates arising from the order $\ge_{k,p}$.

\section*{Acknowledgments}
The author was supported by the National Science and Technology Council, Taiwan (NSTC), under Grant No. 115--2917--I--110--009.

\begin{comment}

\end{comment}

\end{document}